\documentclass[letterpaper, 10 pt, conference]{ieeeconf}
\IEEEoverridecommandlockouts                              
\overrideIEEEmargins                                      
\title{\LARGE \bf
Solving Strongly Convex and Smooth Stackelberg Games Without Modeling the Follower
}

\author{Yansong Li and Shuo Han
\thanks{The authors are with the Department of Electrical and Computer Engineering,
        University of Illinois Chicago, Chicago, IL 60607. Email:
        {\tt\scriptsize  \{yli340,hanshuo\}@uic.edu}. This research was supported by the Army Research Office under grant number W911NF-22-1-0034.}%
}

\usepackage{amsmath,amssymb}
\usepackage{amsthm}
\usepackage{bbm}
\usepackage[nocompress]{cite}
\allowdisplaybreaks[3]

\usepackage{graphicx}



\DeclareMathOperator*{\newargmin}{arg\,min}

\theoremstyle{plain}
\newtheorem{proposition}{Proposition}
\newtheorem{theorem}{Theorem}
\newtheorem{corollary}{Corollary}

\theoremstyle{definition}
\newtheorem{definition}{Definition}

\newtheorem{assumption}{Assumption}

\usepackage[colorlinks=false,hypertexnames=false]{hyperref} 
\usepackage{mathtools}
\usepackage{autonum}

\pdfobjcompresslevel=0

\usepackage{xcolor}
\definecolor{darkgreen}{rgb}{0,0.5,0}

\begin{document}

\maketitle
\thispagestyle{empty}
\pagestyle{empty}



\begin{abstract}
    Stackelberg games have been widely used to model interactive decision-making problems in a variety of domains such as energy systems, transportation, cybersecurity, and human-robot interaction. However, existing algorithms for solving Stackelberg games often require knowledge of the follower's cost function or learning dynamics and may also require the follower to provide an exact best response, which can be difficult to obtain in practice. To circumvent this difficulty, we develop an algorithm that does not require knowledge of the follower's cost function or an exact best response, making it more applicable to real-world scenarios. Specifically, our algorithm only requires the follower to provide an approximately optimal action in response to the leader's action. The inexact best response is used in computing an approximate gradient of the leader's objective function, with which zeroth-order bilevel optimization can be applied to obtain an optimal action for the leader. Our algorithm is proved to converge at a linear rate to a neighborhood of the optimal point when the leader's cost function under the follower's best response is strongly convex and smooth.
\end{abstract}

\section{Introduction}

Stackelberg games have recently been used as a mathematical formulation for a number of tasks in energy systems~\cite{wang_game-theoretic_2014,li_hierarchical_2022,wang_jamming_2018,borndorfer_stackelberg_2012}, transportation~\cite{yang_stackelberg_2007,zhang_competitive_2011}, cybersecurity~\cite{pawlick_game-theoretic_2019}, and human-robot interaction~\cite{nikolaidis_game-theoretic_2017,nikolaidis_mathematical_2017-2,tian_safety_2022}. In a two-player Stackelberg game, one player, which is commonly referred to as the leader, is trying to minimize her cost assuming the other player, which is commonly referred to as the follower, is taking his optimal action after observing the leader's action. In other words, the follower chooses a best response to the leader's action. The Stackelberg game can also be viewed as a special case of bilevel optimization~\cite{colson_overview_2007}. 


Many learning algorithms for Stackelberg games or, more generally, bilevel optimization, require knowing the follower's cost function. For example, Fiez et al.~\cite{fiez_implicit_2020} developed a learning algorithm for two-player Stackelberg games that locally converges to an optimal solution. In their algorithm, both the follower and the leader use the gradient descent algorithm, in which the leader uses an inexact gradient that depends on the first-order and second-order information on the follower's cost function. Chen et al.~\cite{chen_single-timescale_2022-1} made the same assumption that the follower's cost function is known and speeded up the algorithm by Fiez et al.\ by adding a predictive term to the follower's learning dynamics.

When the follower's cost function is unknown, existing algorithms require knowledge of the follower's learning dynamics. For example, Conn et al.~\cite{conn_bilevel_2012-1} designed a learning algorithm for bilevel optimization that does not require first-order information on the follower's cost function. However, they also designed the follower's learning dynamics so that the follower is capable of providing an inexact best response. The inexact best response from the designed learning dynamics can be used by the leader to update her action. There are many ways to design the follower's dynamics that yield an inexact best response; see~\cite{mersha_direct_2011-1,zhang_bilevel_2014-2}. There are also algorithms for bilevel optimization that do not rely on designing the follower's dynamics. However, those algorithms assume specific forms of either the follower's optimization problem (e.g., a linear program~\cite{zare_bilevel_2020}) or the leader's problem (e.g., a least-squares problem~\cite{ehrhardt_inexact_2021}).    

Assumptions of a known cost function or known learning dynamics are often made to the follower in previous works on Stackelberg games. However, these assumptions do not hold in many cases. For instance, in human-robot interaction, the follower is a human agent, whose cost function may not be known, and whose learning dynamics may not follow what is given by existing learning algorithms. Furthermore, humans do not always provide a best response, such a phenomenon is known as bounded rationality. Therefore, it is important to consider and relax these assumptions to effectively design autonomous systems that can collaborate with human agents. Existing work on Stackelberg games with a human follower typically assumes that the follower's cost function is given or attempts to learn the cost function by, e.g., inverse reinforcement learning~\cite{hadfield-menell_cooperative_2016,sadigh_planning_2016}. Some results do not assume knowing the follower's cost function but assume that the follower always provides an exact best response~\cite{nikolaidis_human-robot_2017-3,nikolaidis_mathematical_2017-2}. 


\paragraph*{Contribution}
In this paper, we make an initial attempt to relax the current assumptions on the follower when solving two-player Stackelberg games. In particular, our problem setup makes the following assumptions:
\begin{enumerate}
    \item The follower's cost function is unknown and does not follow a specific form.
    \item The follower's learning dynamics are unknown and cannot be designed by us.
    \item The follower only gives an inexact best response that is $ \varepsilon $ close to the exact best response (to be defined formally in Section~\ref{sec:ibr}).
\end{enumerate}
Our end result is a gradient-based learning algorithm that is guaranteed to converge to a neighborhood of the optimal point at a linear rate, where the size of the neighborhood depends on the inexactness $ \varepsilon $ of the best response oracle. Our algorithm is tested numerically for quadratic cost functions, and the numerical simulation results are consistent with our theoretical analysis.

\section{Background: Stackelberg Games}

We consider a two-player Stackelberg game, where $x \in \mathbb{R}^n$
is the leader's action, and $y \in \mathbb{R}^m$ is the follower's action. The goal of the leader
is to minimize her cost $f_1$ assuming that the follower will take the
best response, i.e., the follower's action is optimal with respect to
the follower's cost function given a leader's action. Mathematically, the leader
attempts to solve the following bilevel optimization problem:
\begin{equation}\label{eq:sg}
  \underset{x \in \mathbb{R}^n}{\min.} \quad f (x) \triangleq f_1 (x, r (x)),
\end{equation}
where 
$
  r (x) = \newargmin_{y \in \mathbb{R}^m} f_2 (x, y)
$
is the best response of the follower. In this paper, we assume that the minimizer of $ f_2 (x, \cdot)$ is unique for each $x$ to ensure that the best response function $r$ is 
well-defined. We denote $ D_jf_i(\cdot,\cdot) $ as the derivative mapping of $ f_i $ on the $j$-th argument, where $ i=1,2 $ and $ j=1,2 $. The derivative mapping of the best response $ r $ is denoted as $ Dr $. The optimal solution of~\eqref{eq:sg} is denoted as $ x^{\star} $.

The Stackelberg game formulation~\eqref{eq:sg} can be used for modeling a variety of tasks. For example, in human-robot collaboration, the robot is modeled as the leader and the human as the follower. With a human follower, we
typically have access to the exact zeroth-order and first-order information of the leader's cost
function $f_1$, but the follower's cost function $f_2$ and
best response function $r$ are typically unknown. We adopt these settings on $ f_1 $, $ f_2 $, and $ r $  together with the following assumptions:
\begin{assumption}
    $D_1 f_1 (x, \cdot)$ is $L_{f_1}^x$-Lipschitz, and $D_2 f_1 (x, \cdot)$
is $L_{f_1}^y$-Lipschitz for any $x$.\label{ass:lip}
\end{assumption}
\begin{assumption}
    The best response function $r$ is differentiable, and $\| Dr (x)
    \| \leq R_1$ for any $x$.\label{ass:R1}
\end{assumption}
\begin{assumption}
    The best response function $r$ is $R_2$-smooth, i.e., $ D r $ is $ R_2 $-Lipschitz continuous. \label{ass:R2}
\end{assumption}

Throughout this paper, we use $ \|\cdot\| $ to denote the $ L_2 $-norm for vectors and the induced $ L_2 $-norm for matrices. 

Assumption~\ref{ass:lip} is a common
assumption in bilevel optimization and Stackelberg games; see~\cite{colson_overview_2007} and~\cite{fiez_implicit_2020}. Assumptions~\ref{ass:R1} and~\ref{ass:R2} imply that the follower does not change his
action rapidly when the leader's action changes.


\section{Proposed Framework}

In this section, we give a gradient-based algorithm in Section~\ref{sec:ibr} that uses an inexact gradient derived in Section~\ref{sec:algo} to solve~\eqref{eq:sg}. Computing the inexact gradient requires that the follower give an inexact best response to the leader's action, as defined in Section~\ref{sec:ibr}. 

\subsection{Inexact Best Response}\label{sec:ibr}
Our algorithm relies on the simple fact that the inexact best response should be close to the exact best response. A formal definition is given as follows.
\begin{definition}
  A follower's action $ y $ is called an \emph{$ \varepsilon $-inexact best response ($\varepsilon$-IBR)} to a leader's action $ x $ if $\| y- r
  (x ) \| \leq \varepsilon$.
\end{definition}

If the follower uses gradient descent to compute an $\varepsilon$-IBR when the leader's action $ x $ is fixed, i.e.,
$y_{t + 1} = y_t - \beta D_2 f_2 (x, y_t)$, where $ \beta $ is a stepsize, then condition $\| y_t - r (x
) \| < \varepsilon$ can be verified by the quantity $\| y_t - y_{t - 1}
\|$ if $f_2 (x, \cdot)$ is strongly convex for any $y$.

We choose to avoid defining the inexact best response as a function $y
\colon \mathbb{ R }^{n} \rightarrow \mathbb{ R }^{m}$ such that for some constant $\varepsilon > 0$, the function $y$ satisfies $\| y (x) - r (x) \| \leq
\varepsilon$ for any $x$. For tasks in HRI, this would imply that the human's actions follow a function of
the robot's actions. In other words, each robot's action would correspond to a specific and deterministic action of the human, thus imposing a strong assumption on the human's
behavior.

Our gradient-based algorithm is given by
\begin{equation} \label{eq:learning-dynamics}
  x^{k + 1} = x^k - \alpha g_{x^k},
\end{equation}
where $k=1,2,\dots$ is the index of iteration. The stepsize $ \alpha $ is a fixed constant that should be chosen small enough;  the specific choice of $ \alpha $ will be discussed in Section~\ref{sec:main}. The term $ g_{x^k} $ is an inexact gradient at $ x^k $. From the decomposition of the exact gradient mapping
\begin{equation}
\nabla f(x) = D_1f_1(x,r(x)) + D_2f_1(x,r(x))Dr(x),
\end{equation}
an estimation $ g_x $ is given by
\begin{equation}
  g_x = D_1 f_1 (x, y) + D \hat{\psi}_x \label{eq:inexact-grad}, 
\end{equation}
where $ D_1f_1(x,y) $ approximates $ D_1f_1(x,r(x)) $, and $D\hat{\psi}_x$ approximates $ D_2f_1(x,r(x))Dr(x) $. The follower's action $ y $ is an $ \varepsilon $-IBR of the leader's action $ x $. The computation of  $D\hat{\psi}_x$ only requires an inexact best response of the follower. Detail will be given in Section~\ref{sec:algo}.

\subsection{Computing an Inexact Gradient}\label{sec:algo}

Throughout this subsection, we denote the point where we perform gradient estimation as $ x_0 $, i.e., we would like to use $ g_{x_0} $ as an estimate of $ \nabla f(x_0) $. 
First, define
\[ D f_1 (x, y) \triangleq D_1 f_1 (x, y) + D_2 f_1 (x, y) D r (x) . \]
Thus, we can approximate $\nabla f (x_0)$ based on the above formula to obtain 
\begin{align}\label{eq:exact-grad}
  & \nabla f (x_0) = Df_1(x_0,r(x_0)) 	\approx D f_1 (x_0, y_0)\\
  & \qquad = D_1 f_1 (x_0, y_0) + D_2 f_1 (x_0, y_0) D r (x_0) . 
\end{align}
The part $D_1 f_1 (x_0, r (x_0))$ can be approximated by $D_1 f_1 (x_0, y_0)$,
where $y_0$ is an $\varepsilon$-IBR of $x_0$. Similarly, to approximate the
second part, define
$ \psi  (x) \triangleq D_2 f_1 (x_0, y_0) r (x) . 
$ Note that the definition of $\psi  (x)$ depends on $(x_0, y_0)$. Since
$ D_2 f_1 (x_0, y_0) D r (x_0) = \nabla \psi  (x_0), $
we can approximate $D_2 f_1 (x_0, r (x_0)) D r (x_0)$ by a finite-difference
approximation of $\psi $, which requires evaluating $\psi $ at points near $x_0$. However, an exact evaluation of $\psi $ is not possible, since we do not have the exact best response $r$. To circumvent this issue, we approximate $\psi  (x)$
by $\hat{\psi}_x \triangleq D_2 f_1 (x_0, y_0) y$, where $y$ is an
$\varepsilon$-IBR of $x$. Define each $x_i$ as
\begin{equation}\label{eq:xi}
  x_i = x_0 + \delta v_i, \quad i = 1, 2, \ldots, p, 
\end{equation}
where $v_i$'s is a positive basis of $\mathbb{R}^n$. (See~\cite[Section 2.1]{conn_introduction_2009} for the definition of a positive basis.) Our goal is to approximate $D_2 f_1 (x_0, r (x_0)) D r (x_0)$
from $\{ (x_0, y_0), (x_1, y_1), \ldots, (x_p, y_p) \}$, where each $y_i$ is an
$\varepsilon$-IBR of $x_i$.  Our approximator $D \hat{\psi}_{x_0}$
for $D_2 f_1 (x_0, r (x_0)) D r (x_0)$ is given by
\begin{align}
  D \hat{\psi}_{x_0} &= \newargmin_{D \hat{\psi}} \left\|
    \left[\begin{array}{c}
      \hat{\psi}_{x_0}\\
      \hat{\Psi}_{x_0}
    \end{array}\right] - \left[\begin{array}{cc}
      \mathbf{1}_{p + 1} & M
    \end{array}\right] \left[\begin{array}{c}
      \hat{\psi}_{x_0}\\
      D \hat{\psi}
    \end{array}\right] \right\| \\
    & = \newargmin_{D \hat{\psi}} \|
    \hat{\Psi}_{x_0} - M D \hat{\psi} \|,
\end{align}
where
$
 \hat{\Psi}_{x_0} = (\hat{\psi}_{x_1}, \ldots, \hat{\psi}_{x_p})^{\top},
$ and 
\[   \quad M = \left[\begin{array}{c}
  0\\
  \delta V
\end{array}\right], 
\qquad V = [v_1, \ldots, v_p]^{\top}.
\]
\section{Main Results}
In this section, we show that~\eqref{eq:learning-dynamics} converges at a linear rate. The difference between~\eqref{eq:learning-dynamics} and the standard gradient descent is that the gradient~\eqref{eq:inexact-grad} used in~\eqref{eq:learning-dynamics} is inexact. Thus, if the inexact gradient~\eqref{eq:inexact-grad} is close to the true gradient, a convergence result similar to the standard gradient descent is expected. In Section~\ref{sec:inexactness}, we adopt the proof in~\cite[Theorem 2.13]{conn_introduction_2009} to show that the difference between the inexact gradient computed by~\eqref{eq:inexact-grad} and the true gradient is upper bounded. 
The upper bound relies on a function that can be further upper bounded by a function of $\| \nabla f(x) \|$ based on Assumption~\ref{ass:kappa}. The meaning of Assumption~\ref{ass:kappa} will be discussed in detail in Section~\ref{sec:kappa}. 
By using the upper bound constructed in Section~\ref{sec:inexactness}, we establish a similarity between~\eqref{eq:learning-dynamics} and the standard gradient descent and give a proof of convergence for~\eqref{eq:learning-dynamics} in Section~\ref{sec:main}.

\subsection{Upper Bounding the Error of the Inexact Gradient}\label{sec:inexactness}
The following proposition shows that the error of the inexact gradient is upper bounded.
\begin{proposition}
Suppose $f_1$
  and $f_2$ satisfy Assumptions~\ref{ass:lip}, \ref{ass:R1}, and
  \ref{ass:R2}. The inexact gradient $ g_{x} $ defined in \eqref{eq:inexact-grad} satisfies \label{prpo:inexac}
 \begin{equation}\label{eq:bounding_grad_diff}
  \| \nabla f (x ) - g_{x } \| \leq \varphi (x )
 \end{equation}
  for all $x$, where
  \begin{equation}\label{eq:upperboundfor_gx}
    \varphi (x) \triangleq a \varepsilon + b \| D_2 f_1 (x , r (x )) \|
  \end{equation}
  with $a = L_{f_1}^y R_1 + L_{f_1}^x + b L_{f_1}^y$ \ and $ b = \sqrt{p + 1}(\delta^2 R_2 + \varepsilon) \| M^{\dagger} \|/ 2 $.
\end{proposition}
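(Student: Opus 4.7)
The plan is to bound $\|\nabla f(x) - g_x\|$ by splitting it into two pieces matching the decomposition of the gradient and controlling each separately. Writing $x_0$ for the expansion point, the triangle inequality gives
\[
\|\nabla f(x_0) - g_{x_0}\| \leq \|D_1 f_1(x_0, r(x_0)) - D_1 f_1(x_0, y_0)\| + \|D_2 f_1(x_0, r(x_0)) Dr(x_0) - D\hat{\psi}_{x_0}\|.
\]
The first term is immediately at most $L_{f_1}^x \varepsilon$ by Assumption~\ref{ass:lip} together with the $\varepsilon$-IBR property $\|y_0 - r(x_0)\|\leq\varepsilon$. Inserting the intermediate quantity $D_2 f_1(x_0, y_0) Dr(x_0)$ into the second term and invoking Assumptions~\ref{ass:lip} and~\ref{ass:R1} peels off an additional $L_{f_1}^y R_1 \varepsilon$, and reduces the remaining task to controlling $\|D_2 f_1(x_0, y_0) Dr(x_0) - D\hat{\psi}_{x_0}\|$, which is exactly the simplex-gradient error $\|\nabla\psi(x_0) - D\hat{\psi}_{x_0}\|$ for $\psi(x) = D_2 f_1(x_0, y_0) r(x)$.

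Bounding this simplex-gradient error is the main technical step, and I would adapt the strategy of Theorem~2.13 in the cited Conn et al.\ monograph to the fact that $\psi$ itself is only accessible through the noisy surrogate $\hat{\psi}$. Two per-entry error sources enter. The deterministic Taylor remainder coming from $R_2$-smoothness (Assumption~\ref{ass:R2}) gives
\[
\psi(x_i) = \psi(x_0) + \delta v_i^\top \nabla\psi(x_0) + \rho_i, \qquad |\rho_i| \leq \tfrac{1}{2}\delta^2 R_2 \|D_2 f_1(x_0, y_0)\|,
\]
while the $\varepsilon$-IBR property gives the surrogate-evaluation bound $|\hat{\psi}_{x_i} - \psi(x_i)| = |D_2 f_1(x_0, y_0)(y_i - r(x_i))| \leq \varepsilon\,\|D_2 f_1(x_0, y_0)\|$. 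Stacking these per-entry bounds into a residual vector of length $p+1$ and passing it through the pseudoinverse characterization of the least-squares minimizer yields
\[
\|D\hat{\psi}_{x_0} - \nabla\psi(x_0)\| \leq b\,\|D_2 f_1(x_0, y_0)\|,
\]
where $b$ collects $\sqrt{p+1}$, $\|M^\dagger\|$, and the Taylor/noise constants exactly as stated.

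To turn $\|D_2 f_1(x_0, y_0)\|$ into the $\|D_2 f_1(x_0, r(x_0))\|$ that appears in $\varphi$, one last application of Assumption~\ref{ass:lip} gives $\|D_2 f_1(x_0, y_0)\| \leq \|D_2 f_1(x_0, r(x_0))\| + L_{f_1}^y \varepsilon$. Distributing this produces an additional $b L_{f_1}^y \varepsilon$ term, and summing all contributions to the $\varepsilon$-coefficient yields exactly $a = L_{f_1}^x + L_{f_1}^y R_1 + b L_{f_1}^y$. The delicate step is clearly the simplex-gradient estimate, since one must carry the deterministic $O(\delta^2)$ Taylor remainder and the $O(\varepsilon)$ surrogate-evaluation error side-by-side through the least-squares pseudoinverse; everything else reduces to routine Lipschitz bookkeeping and triangle inequalities.
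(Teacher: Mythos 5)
Your proposal follows essentially the same route as the paper's proof: the same triangle-inequality decomposition into the $D_1 f_1$ error and the $D_2 f_1\, Dr$ error, the same insertion of $D_2 f_1(x_0,y_0)Dr(x_0)$ to peel off $L_{f_1}^y R_1 \varepsilon$, the same adaptation of the Conn et al.\ simplex-gradient argument with the Taylor remainder and the $\varepsilon$-IBR surrogate error carried jointly through $M^\dagger$, and the same final substitution $\|D_2 f_1(x_0,y_0)\| \leq \|D_2 f_1(x_0,r(x_0))\| + L_{f_1}^y\varepsilon$. The argument is correct and matches the paper's proof step for step.
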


\begin{proof}
  By definition, at any point $x_0$
  \begin{equation}
  \label{eq:diff}
  \begin{aligned}
    &\| \nabla f (x_0) - g_{x_0} \| \\
    &\qquad \leq \| D_1 f_1 (x_0, r (x_0)) - D_1 f_1(x_0, y_0) \| \\ 
    & \qquad \quad+ \| D_2 f_1 (x_0, r (x_0)) D r (x_0) - D \hat{\psi}_{x_0}\|. 
  \end{aligned}
  \end{equation}
The first part of the right-hand side of~\eqref{eq:diff} can be bounded as
  \begin{align}
  \|D_1f_1 (x_0, r (x_0)) - D_1 f_1 (x_0, y_0) \| &\leq L_{f_1}^x \| r   
     (x_0) - y_0 \| \\
     &\leq L_{f_1}^x \varepsilon . \label{eq:1term}
  \end{align}
  Define $\Psi_{x_0} = (\psi (x_1), \ldots, \psi (x_p))^{\top}$, where each
  $x_i$ is defined in~\eqref{eq:xi}. The second part of the right-hand side of
 ~\eqref{eq:diff} is bounded by
 \begin{multline}\label{eq:sec-part}
  \| D_2 f_1 (x_0, r (x_0)) D r (x_0) - D \hat{\psi}_{x_0} \| \\
  \qquad \leq \| D_2 f_1 (x_0, r (x_0)) D r (x_0) - D_2 f_1 (x_0, y_0) D r (x_0) \| \\ 
  \qquad\quad+ \| D_2f_1 (x_0, y_0) D r (x_0) - D \hat{\psi}_{x_0} \| .
 \end{multline}
  The first part of~\eqref{eq:sec-part} can be bounded by 
  \begin{align}
    &\|D_2 f_1 (x_0, r (x_0)) D r (x_0) - D_2 f_1 (x_0, y_0) D r (x_0) \|\\
    &\qquad\leq \| D_2 f_1 (x_0, r (x_0))  - D_2 f_1 (x_0, y_0) \| \| D r (x_0) \|\\
    &\qquad\leq L_{f_1}^y R_1 \varepsilon.\label{eq:2term}
  \end{align}
  The second part of~\eqref{eq:sec-part} can be bounded by
  \begin{align}
    &  \| D_2 f_1 (x_0, y_0) D r (x_0)  - D \hat{\psi}_{x_0} \|\\
    &\qquad = \| \nabla \psi(x_0) - D \hat{\psi}_{x_0} \| \\
    &\qquad \leq \left\| \left[\begin{array}{c}
      \psi (x_0)\\
      \nabla \psi (x_0)
    \end{array}\right] - D \hat{\psi}_{x_0} \right\| \\
    &\qquad= \left\| \left[\begin{array}{c}
      \psi (x_0)\\
      \nabla \psi (x_0)
    \end{array}\right] - M^{\dagger} \hat{\Psi}_{x_0} \right\| \\
    &\qquad \leq \left\| \left[\begin{array}{c}
      \psi (x_0)\\
      \nabla \psi (x_0)
    \end{array}\right] - M^{\dagger} \Psi_{x_0} \right\|  + \| M^{\dagger}\Psi_{x_0} - M^{\dagger} \hat{\Psi}_{x_0} \| . 
  \end{align}
  To bound $\left\| \left[\begin{array}{c}
    \psi (x_0)\\
    \nabla \psi (x_0)
  \end{array}\right] - M^{\dagger} \Psi_{x_0} \right\|$, define
  \begin{equation}
    h \triangleq M \left[\begin{array}{c}
      \psi (x_0)\\
      \nabla \psi (x_0)
    \end{array}\right] - \Psi_{x_0} . \label{eq:residual}
  \end{equation}
  Thus,
  \begin{align}
    | h_i | &= | \psi (x_0) - \psi (x_i) + \langle \nabla \psi (x_0), x_i -
    x_0 \rangle | \\
    &= \left| \int_0^1 \langle \nabla \psi (x_0)\right. \\
    &\qquad-\left. \nabla \psi (x_0 + t (x_i - x_0)), x_i - x_0 \rangle \,dt \right| \\
    & \leq \| x_i - x_0 \| \int_0^1 \| \nabla \psi (x_0) - \nabla \psi (x_0 +
    t (x_i - x_0)) \| \,dt \\
    & \leq \| x_i - x_0 \| \| D_2 f_1 (x_0, y_0) \| \\
    &\qquad \cdot\int_0^1 \| D r (x_0) - D
    r (x_0 + t (x_i - x_0)) \| \,dt \\
    & \leq \| x_i - x_0 \|^2 \| D_2 f_1 (x_0, y_0) \| R_2 \int_0^1 t \,dt
    \\
    & = \frac{\delta^2 R_2 \| D_2 f_1 (x_0, y_0) \|}{2} . 
  \end{align}
From~\eqref{eq:residual}, 
  \begin{align}
    \left\| \left[\begin{array}{c}
      \psi (x_0)\\
      \nabla \psi (x_0)
    \end{array}\right] - M^{\dagger} \Psi_{x_0} \right\| &=\| M^{\dagger} h\|. 
  \end{align}
  Also,
  \begin{align}
    \| M^{\dagger} h\|& \leq  \| M^{\dagger} \| \| h \| \\
    &\leq  \sqrt{\dim (h)} \frac{\delta^2 R_2}{2} \| M^{\dagger} \| \| D_2 f_1
    (x_0, y_0) \| \\
    & =  \frac{\sqrt{p + 1}}{2} \delta^2 R_2 \| M^{\dagger} \| (\| D_2 f_1
    (x_0, r (x_0)) \| + L_{f_1}^y \varepsilon) .   \label{eq:3term}
  \end{align}
  Finally, we can bound $\| M^{\dagger} \Psi_{x_0} - M^{\dagger}
  \hat{\Psi}_{x_0} \|$ by
  \begin{align}
    &\| M^{\dagger} \Psi_{x_0}  - M^{\dagger} \hat{\Psi}_{x_0} \|  \\
    & \qquad \leq  \|M^{\dagger} \| \| \Psi_{x_0} - \hat{\Psi}_{x_0} \| \\
    & \qquad\leq  \frac{\sqrt{p + 1}}{2} \| M^{\dagger} \| \| D_2 f_1 (x_0, y_0) \|\varepsilon \\
    & \qquad\leq  \frac{\sqrt{p + 1}}{2} \varepsilon \| M^{\dagger} \| (\| D_2 f_1
    (x_0, r (x_0)) \| + L_{f_1}^y \varepsilon) .   \label{eq:4term}
  \end{align}
Combine~\eqref{eq:1term},~\eqref{eq:2term},~\eqref{eq:3term}, and~\eqref{eq:4term} to obtain
\begin{align}
  &\| \nabla f (x_0) - g_{x_0} \| \\
  &\qquad \leq  L_{f_1}^x \varepsilon + L_{f_1}^y R_1 \varepsilon \\
  &\qquad\quad+ \frac{\sqrt{p + 1}}{2} \delta^2 R_2 \| M^{\dagger} \|(\| D_2 f_1 (x_0, r (x_0)) \| + L_{f_1}^y \varepsilon)\\
  &\qquad\quad+  \frac{\sqrt{p +1}}{2} \varepsilon \| M^{\dagger} \| (\| D_2 f_1 (x_0, r (x_0)) \| +L_{f_1}^y \varepsilon) \\
  &\qquad=  L_{f_1}^x \varepsilon + L_{f_1}^y R_1 \varepsilon \\
  &\qquad\quad + \frac{\sqrt{p +1}}{2} \| M^{\dagger} \| (\delta^2 R (\| D_2 f_1 (x_0, r (x_0)) \| +L_{f_1}^y \varepsilon) \\
  &\qquad\quad+ \varepsilon (\| D_2 f_1 (x_0, r (x_0)) \| +L_{f_1}^y \varepsilon)) \\
  &\qquad=  L_{f_1}^x \varepsilon + L_{f_1}^y R_1 \varepsilon + \frac{\sqrt{p +1}}{2} (\delta^2 R_2 + \varepsilon) \| M^{\dagger} \| \\
  &\qquad \qquad \cdot(\varepsilon L_{f_1}^y + \| D_2 f_1 (x_0, r (x_0)) \|) . 
\end{align}
  Since $x_0$ is arbitrary, the proof is finished.
\end{proof}

The value of $\|M^{\dagger} \|$ in the upper bound given in Proposition~\ref{prpo:inexac} can be calculated once a positive basis is chosen. The following corollary gives an expression of $\|M^{\dagger} \|$ under the positive basis that consists of the standard basis and the negative standard basis.
 
\begin{corollary} \label{cor:standard_basis}
Choose each $v_i$ as the positive basis defined as
  \begin{equation}
    V = [\begin{array}{cccc}
      v_1 & v_2 & \ldots & v_{2 n}
    \end{array}]^{\top} = \left[\begin{array}{cc}
      I_n & - I_n
    \end{array}\right] .
  \end{equation}
In this case,
  \begin{equation}
    M = \left[\begin{array}{c}
      0\\
      \delta I_n\\
      - \delta I_n
    \end{array}\right] . \label{eq:standard-basis}
  \end{equation}
  We have
  $ \varphi (x) = a \varepsilon + b \| D_2 f_1 (x , r (x )) \| $
  with $a = L_{f_1}^y R_1 + L_{f_1}^x + b L_{f_1}^y$ \ and
   $b = \sqrt{4 n + 2} \left( \varepsilon / \delta + \delta R_2 \right) / 4 $. 
\end{corollary}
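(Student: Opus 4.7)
The plan is to specialize the generic bound from Proposition~\ref{prpo:inexac} to the specific positive basis $V=[I_n,-I_n]$. Since the corollary is really just a matter of evaluating $\|M^\dagger\|$ and substituting into the formula $b=\sqrt{p+1}(\delta^2 R_2+\varepsilon)\|M^\dagger\|/2$ from Proposition~\ref{prpo:inexac} with $p=2n$, the proof reduces to a short linear-algebra computation.

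First I would note that with the choice $V=[I_n,-I_n]^\top$, the number of positive-basis vectors is $p=2n$, so $\sqrt{p+1}=\sqrt{2n+1}$. Next, I would compute $M^\top M$ directly from the block structure in~\eqref{eq:standard-basis}: the leading zero row contributes nothing and the two signed identity blocks each contribute $\delta^2 I_n$, giving $M^\top M=2\delta^2 I_n$. Because $M$ has full column rank, the Moore--Penrose pseudoinverse is $M^\dagger=(M^\top M)^{-1}M^\top=(2\delta^2)^{-1}M^\top$.

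Then I would read off the spectral norm: $\|M^\dagger\|=(2\delta^2)^{-1}\|M^\top\|=(2\delta^2)^{-1}\sqrt{\lambda_{\max}(M^\top M)}=(2\delta^2)^{-1}\sqrt{2\delta^2}=1/(\delta\sqrt{2})$. Substituting $\sqrt{p+1}=\sqrt{2n+1}$ and this value of $\|M^\dagger\|$ into the expression for $b$ in Proposition~\ref{prpo:inexac} yields
\begin{equation}
b=\frac{\sqrt{2n+1}(\delta^2 R_2+\varepsilon)}{2\delta\sqrt{2}}=\frac{\sqrt{4n+2}}{4}\!\left(\delta R_2+\frac{\varepsilon}{\delta}\right),
\end{equation}
which is exactly the claimed expression. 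The formula $a=L_{f_1}^y R_1+L_{f_1}^x+bL_{f_1}^y$ and the form of $\varphi(x)$ are inherited verbatim from Proposition~\ref{prpo:inexac}.

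I do not anticipate any real obstacle here: the only thing to be careful about is verifying that $[I_n,-I_n]$ is indeed a positive basis of $\mathbb{R}^n$ (which is immediate, since any vector $x\in\mathbb{R}^n$ can be written as a nonnegative combination of $\pm e_i$), and keeping track of the factors of $\delta$ when simplifying $\sqrt{2n+1}/(\delta\sqrt{2})\cdot(\delta^2 R_2+\varepsilon)/2$ into the form $\sqrt{4n+2}(\delta R_2+\varepsilon/\delta)/4$.
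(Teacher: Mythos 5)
Your proposal is correct and follows essentially the same route as the paper: both compute $M^\top M = 2\delta^2 I_n$ and deduce $\|M^\dagger\| = 1/(\sqrt{2}\delta) = \sqrt{2}/(2\delta)$ (the paper via $\sigma_{\max}(M^\dagger)=1/\sigma_{\min}(M)$, you via the explicit full-column-rank formula $M^\dagger=(M^\top M)^{-1}M^\top$, which is an immaterial difference). Your additional step of substituting $p=2n$ and simplifying to $b=\sqrt{4n+2}(\varepsilon/\delta+\delta R_2)/4$ is the arithmetic the paper leaves implicit, and it checks out.
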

\begin{proof}
  Denote by $ \sigma_{\max}(\cdot) $ and $ \sigma_{\min}(\cdot) $ the maximum and minimum singular values of a matrix, respectively. Notice $M^T M = 2\delta^2 I$. This implies $\sigma_{\min}(M) = \sqrt{2} \delta$. Use the fact $\sigma_{\max}(M^\dagger) = 1/\sigma_{\min}(M)$ to obtain $ \|M^{\dagger}\|= \sqrt{2}/(2\delta) $.
\end{proof}

\subsection{Bounded Sensitivity}\label{sec:kappa}

The only non-constant term in the upper bound given in~\eqref{eq:upperboundfor_gx} is $\| D_2
f_1 (x_0, r (x_0)) \|$. To upper bound this term, we make the following assumption.

\begin{assumption}[Bounded sensitivity]
  There exists a constant $\kappa > 0$ such that $\| D_2 f_1 (x, r (x)) \| \leq \kappa
  \| \nabla f (x) \|$ for any $x$.\label{ass:kappa}
\end{assumption}

With Assumption~\ref{ass:kappa}, this term can be further bounded by $\kappa \|  \nabla f
(x_0) \|$, after which techniques for analyzing the standard gradient descent algorithm can be applied (see Section~\ref{sec:main}). This subsection will discuss the practical implications of Assumption~\ref{ass:kappa}.

Recall that the exact gradient of~\eqref{eq:sg} is defined as 
\begin{equation}
  \nabla f (x) = D_1 f_1 (x, r (x)) + D_2 f_1 (x, r (x)) D r (x). 
\end{equation}
Under Assumption~\ref{ass:kappa} and Assumption~\ref{ass:R1}, 
\begin{align}
  \| D_2 &f_1 (x,r(x)) \|  \\
  & \leq \kappa \| D_1 f_1 (x, r (x)) + D_2 f_1 (x, r (x)) D r (x)\|\\
  & \leq \kappa ( \| D_1 f_1 (x,r(x)) \| 
  + \| D_2 f_1 (x,r(x)) \| R_1 ).
\end{align}
Rearrange the above equation to obtain
\begin{equation}\label{eq:kappa-implication}
  \frac{1}{\kappa} \leq \frac{\left\|D_1 f_1(x, r(x))\right\|}{\left\|D_2 f_1(x, r(x))\right\|}+R_1
\end{equation}
when $\kappa\neq 0$ and $ \| D_2 f_1 (x,r(x)) \| \neq 0$.
The ratio $\left\|D_1 f_1(x, r(x))\right\| / \left\|D_2 f_1(x, r(x))\right\|$ characterizes the sensitivity of the leader's cost to the follower's action relative to the leader's action when the follower chooses the best response. Based on~\eqref{eq:kappa-implication}, a smaller $ \kappa $ implies that the leader's cost is less sensitive to the follower's action. The existence of $ \kappa $ implies that the sensitivity is bounded, hence the name \emph{bounded sensitivity assumption} for Assumption~\ref{ass:kappa}.

Another intuitive way to understand the bounded sensitivity assumption is to consider fully collaborative Stackelberg games, where $f_{1}=f_{2}$. In this setting, when the follower takes the best response, the leader's cost is only affected by her own action, which implies $ \kappa = 0 $. Formally, since $r(x)$ is the follower's best response to $x$, the optimality condition of the follower's optimization problem is given by $D_{2}f_{2}(x,r(x))=0$. When $f_{1}=f_{2}$, the optimality condition implies $D_{2}f_{1}(x,r(x))=0$,
allowing one to choose $\kappa=0$ to satisfy Assumption~\ref{ass:kappa}.

\subsection{Convergence Analysis for Strongly Convex and Smooth Cost}\label{sec:main}
Under Assumption~\ref{ass:kappa} and Proposition~\ref{prpo:inexac}, we can prove formally that our algorithm converges linearly to a neighborhood of $ x^{\star} $ by using a similar technique for analyzing standard gradient descent with strongly convex and smooth functions~\cite{frasconi_linear_2016}.
\begin{theorem}\label{thm:main}
  Suppose the follower gives an $\varepsilon$-IBR in every step, and the leader uses stepsize
  $\alpha < 1 / L_f$. Also, assume that $f$ is $\mu_f$-strongly convex and
  $L_f$-smooth, and $f_1$ and $f_2$ satisfy Assumptions~\ref{ass:lip},
  \ref{ass:R1}, \ref{ass:R2}, and \ref{ass:kappa}. Let $a = L_{f_1}^y R_1 +
  L_{f_1}^x + b L_{f_1}^y$, and $b = \sqrt{p + 1} (\delta^2 R_2 + \varepsilon)
  \| M^{\dagger} \| / 2$. The algorithm given by~\eqref{eq:learning-dynamics} and~\eqref{eq:inexact-grad} converges to a
  neighborhood of $f^{\star}$ with the following property:
  \[ \limsup_{k \rightarrow \infty} (f (x_{k + 1}) - f^{\star})
     \leq \frac{(2 a b \kappa \varepsilon + a^2 \varepsilon^2)}{2 \mu_f (1 -
     b^2 \kappa^2 - 2 a b \kappa \varepsilon)} \] if 
     \begin{equation}\label{eq:condition}
     b^2 \kappa^2 + 2 a b \kappa \varepsilon - 1 < 0.
     \end{equation}
\end{theorem}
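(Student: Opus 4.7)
The plan is to mimic the standard analysis of gradient descent for strongly convex and smooth objectives, treating the inexactness of the update direction as a perturbation whose magnitude is controlled by Proposition~\ref{prpo:inexac} combined with Assumption~\ref{ass:kappa}. Starting from the descent lemma for $L_f$-smooth $f$ applied along~\eqref{eq:learning-dynamics},
\begin{equation*}
f(x^{k+1}) \leq f(x^k) - \alpha \langle \nabla f(x^k), g_{x^k}\rangle + \tfrac{L_f \alpha^2}{2}\|g_{x^k}\|^2,
\end{equation*}
I would set $e_k \triangleq g_{x^k} - \nabla f(x^k)$, use $\alpha < 1/L_f$ to relax $L_f\alpha^2/2 \leq \alpha/2$, and expand $\|g_{x^k}\|^2$. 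The mixed inner-product terms $\langle \nabla f(x^k), e_k\rangle$ cancel identically, leaving the clean bound
\begin{equation*}
f(x^{k+1}) - f^\star \leq f(x^k) - f^\star - \tfrac{\alpha}{2}\|\nabla f(x^k)\|^2 + \tfrac{\alpha}{2}\|e_k\|^2.
\end{equation*}

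I would then bound $\|e_k\|^2$ in a form that matches the theorem statement. Proposition~\ref{prpo:inexac} gives $\|e_k\| \leq a\varepsilon + b\|D_2 f_1(x^k, r(x^k))\|$, and Assumption~\ref{ass:kappa} replaces the last factor with $\kappa\|\nabla f(x^k)\|$, so $\|e_k\| \leq a\varepsilon + b\kappa\|\nabla f(x^k)\|$. Squaring produces a cross term $2ab\kappa\varepsilon\|\nabla f(x^k)\|$; applying the elementary inequality $t \leq 1 + t^2$ with $t = \|\nabla f(x^k)\|$ redistributes this cross term into a pure constant and an additional quadratic piece, giving
\begin{equation*}
\|e_k\|^2 \leq (a^2\varepsilon^2 + 2ab\kappa\varepsilon) + (b^2\kappa^2 + 2ab\kappa\varepsilon)\|\nabla f(x^k)\|^2.
\end{equation*}

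Substituting this back and invoking the Polyak-Lojasiewicz inequality $\|\nabla f(x)\|^2 \geq 2\mu_f(f(x) - f^\star)$, which is a standard consequence of $\mu_f$-strong convexity, produces a one-step contraction
\begin{equation*}
f(x^{k+1}) - f^\star \leq \rho (f(x^k) - f^\star) + c,
\end{equation*}
with $\rho = 1 - \alpha\mu_f(1 - b^2\kappa^2 - 2ab\kappa\varepsilon)$ and $c = \alpha(a^2\varepsilon^2 + 2ab\kappa\varepsilon)/2$. Condition~\eqref{eq:condition} is exactly what forces $\rho < 1$, while $\alpha < 1/L_f$ together with $\mu_f \leq L_f$ keeps $\rho$ nonnegative; iterating the recursion and taking $\limsup_{k\to\infty}$ yields the fixed-point bound $c/(1-\rho)$, in which $\alpha$ cancels and the stated expression emerges.

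The main obstacle I anticipate is the squaring step: crude bounds such as $(u+v)^2 \leq 2u^2 + 2v^2$ wipe out the $2ab\kappa\varepsilon$ cross term altogether and cannot reproduce either the stated numerator $a^2\varepsilon^2 + 2ab\kappa\varepsilon$ or the matching $-2ab\kappa\varepsilon$ correction inside $1 - b^2\kappa^2 - 2ab\kappa\varepsilon$. The identity $t \leq 1 + t^2$ is what puts these two pieces on a common footing, so the argument really hinges on selecting this particular splitting; the remainder of the proof is a mechanical combination of the descent lemma, Proposition~\ref{prpo:inexac}, and strong convexity.
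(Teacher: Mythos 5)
Your proposal is correct and follows essentially the same route as the paper's proof: descent lemma, cancellation of the cross term using $\alpha<1/L_f$ to reach $f(x^{k+1})-f^\star \le f(x^k)-f^\star-\tfrac{\alpha}{2}\|\nabla f(x^k)\|^2+\tfrac{\alpha}{2}\varphi(x_k)^2$, then Proposition~\ref{prpo:inexac} with Assumption~\ref{ass:kappa}, the PL inequality, and iteration of the resulting contraction. The only (cosmetic) difference is that you absorb the linear cross term via the single inequality $t\le 1+t^2$, whereas the paper splits into the cases $\|\nabla f(x_k)\|\ge 1$ and $\|\nabla f(x_k)\|\le 1$; both yield the identical recursion and fixed-point bound.
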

\begin{proof}
  By the Lipschitz continuity of $\nabla f$, we have
\begin{align}\label{eq:inn}
&f (x_{k + 1})  - f (x_k)  \\
&\qquad \leq \langle \nabla f (x_k), x_{k + 1} - x_k\rangle + \frac{L_f}{2} \| x_{k + 1} - x_k \|^2 \\
&\qquad = - \alpha \langle \nabla f (x_k), g (x_k) \rangle + \frac{\alpha^2 L_f}{2} \| g (x_k) \|^2
\end{align}
By~\eqref{eq:bounding_grad_diff},
  \begin{align}
    \varphi (x_k)^2 &\geq \| \nabla f (x_k) - g (x_k) \|^2 \\
    &= \| \nabla f (x_k)
     \|^2 +  \| g (x_k) \|^2 - 2 \langle \nabla f (x_k), g (x_k) \rangle .
  \end{align}
  Rearrange the above inequality to obtain
  \begin{multline}\label{ineq:11}
    - 2 \langle \nabla f (x_k), g (x_k) \rangle \\
    \qquad\leq \varphi (x_k)^2\ - \|\nabla f (x_k) \|^2 -  \| g (x_k) \|^2 . 
  \end{multline}
  Substituting~\eqref{ineq:11} into~\eqref{eq:inn} gives
  \begin{multline}\label{eq:descentwithineactgrad}
    f (x_{k + 1}) - f (x_k) \\
    \leq - \frac{\alpha}{2} \| \nabla f (x_k) \|^2 + \left( \frac{\alpha^2 L_f}{2} - \frac{\alpha}{2} \right) \| g (x_k) \|^2 \\
    + \frac{\alpha \varphi (x_k)^2}{2} . 
  \end{multline}
  From~\eqref{eq:descentwithineactgrad},
  \begin{align}
    &f (x_{k + 1}) - f (x_k)  \\
    &\qquad \leq -\frac{\alpha}{2} \| \nabla f (x_k) \|^2 + \frac{\alpha \varphi (x_k)^2}{2}\\
    &\qquad \leq - \frac{\alpha}{2} \| \nabla f (x_k) \|^2 + \frac{\alpha}{2} (a\varepsilon + b \kappa \| \nabla f (x_k) \|)^2 \\
    &\qquad= - \frac{\alpha}{2} ( (1-b^2\kappa^2) \| \nabla f (x_k) \|^2 - a^2 \varepsilon^2 \\
    &\qquad\quad- 2 a b \kappa \varepsilon \| \nabla f(x_k) \|). 
  \end{align}
  Since $b^2 \kappa^2 + 2 a b \kappa \varepsilon - 1 < 0$, if $\| \nabla f
  (x_k) \| \geq 1$
  \begin{align}
    &f  (x_{k + 1}) - f (x_k)  \\
    &\qquad \leq - \frac{\alpha}{2} ((1-b^2\kappa^2) \| \nabla f (x_k) \|^2\\
    &\qquad \quad- 2 a b \kappa \varepsilon \| \nabla f(x_k) \|^2) + \frac{\alpha a^2 \varepsilon^2}{2} \\
    &\qquad\leq \mu_f \alpha (b^2 \kappa^2 + 2 a b \kappa\varepsilon - 1) (f (x_k) - f^{\star}) \\
    &\qquad \quad+ \frac{\alpha a^2 \varepsilon^2}{2} ,
  \end{align}
  else if $\| \nabla f (x) \| \leq 1$, we have
  \begin{align}
    &f (x_{k + 1}) - f (x_k)  \\
    &\qquad\leq - \frac{\alpha}{2} (\| \nabla f (x_k) \|^2- b^2 k^2 \| \nabla f (x_k) \|^2)\\
    &\qquad\quad + \frac{\alpha}{2} (2 a b \kappa \varepsilon + a^2 \varepsilon^2) \\
    &\qquad\leq  \mu_f \alpha (b^2 k^2 - 1) (f (x_k) - f^{\star}) \\
    &\qquad\quad +\frac{\alpha}{2} (2 a b \kappa \varepsilon + a^2 \varepsilon^2) .
  \end{align}
  Rearrange and subtract $f^{\star}$ on each side of the above formula to obtain
  \begin{multline}
    f (x_{k + 1}) - f^{\star} \leq (1 + \mu_f \alpha (b^2 \kappa^2 + 2 a b \kappa \varepsilon - 1)) \\
    \qquad\quad\cdot(f (x_k) - f^{\star}) + \frac{\alpha}{2} (2 a b \kappa \varepsilon + a^2 \varepsilon^2) .
  \end{multline}
  Note that $\alpha \leq 1 / L_f \leq 1 / \mu_f$. Thus, \ $1 > 1 + \mu_f
  \alpha (b^2 \kappa^2 + 2 a b \kappa \varepsilon - 1) \geq 0$. Take $f (x_k)
  - f^{\star}$ as a Lyapunov function $V (x_k)$ finishes the proof.
\end{proof}
The theorem shows that the algorithm in~\eqref{eq:learning-dynamics} converges to a neighborhood of $ x^{\star} $ linearly. Because $b^2 \kappa^2 + 2 a b \kappa \varepsilon - 1$ is an increasing function of $ \varepsilon $, condition~\eqref{eq:condition} can always be satisfied if $ \varepsilon $ is small enough. We will discuss more on the effect of $ \varepsilon $ in Section~\ref{sec:simu}. 
\section{Numerical Simulation}\label{sec:simu}

In this section, we test our algorithm when both players use a convex quadratic cost. Formally, we define $ f_i \colon \mathbb{ R }^{n}\times \mathbb{ R }^{m} \rightarrow \mathbb{ R } $ for $i=1,2$ such that 

\begin{equation}
  f_i(x, y)=
  \frac{1}{2} \left[\begin{array}{l}
    x \\
    y
    \end{array}\right]^{\top}
    \left[\begin{array}{cc}
    P_i & Q_i \\
    Q_i^{\top} & R_i
    \end{array}\right]
   \left[\begin{array}{l}
    x \\
    y
    \end{array}\right],
\end{equation}
where
\begin{equation}
  \left[\begin{array}{cc}
    P_i & Q_i \\
    Q_i^{\top} & R_i
    \end{array}\right]
\end{equation}
is positive definite. We adopt the same positive basis defined in Corollary~\ref{cor:standard_basis} to compute the inexact gradient $g_x$ defined by~\eqref{eq:inexact-grad}. Note that $$ \nabla  f(x) = \left( P_1 - Q_1  R_2^{-1} Q_2^{\top} + Q_2 R_2^{-1}R_1^{\top}R_2^{-1}Q_2^{\top}\right) x $$ and $ D_2f_1(x,r(x)) = \left( P_1 - Q_1  R_2^{-1} Q_2^{\top} \right) x. $ Thus, $ \kappa $ can be computed by 
\begin{align}
  \kappa &= \frac{\sigma_{\max} \left( P_1 - Q_1  R_2^{-1} Q_2^{\top} + Q_2 R_2^{-1}R_1^{\top}R_2^{-1}Q_2^{\top}\right) }{\sigma_{\min}\left( P_1 - Q_1  R_2^{-1} Q_2^{\top} \right)} ,
\end{align}
where $ \sigma_{\max}(\cdot) $ and $ \sigma_{\min}(\cdot) $ are the maximum and minimum singular values of the corresponding matrix. Also, we restrict our simulation to the case that $ P_1 - Q_1  R_2^{-1} Q_2^{\top} + Q_2 R_2^{-1}R_1^{\top}R_2^{-1}Q_2^{\top} $ is positive definite to match the strong convexity assumption made in Section~\ref{sec:main}.
\subsection{The Effect of Inexact Best Response}

We chose the sampling radius $ \delta $ as $ \delta = 0.1 $, the stepsize $ \alpha $ as $ \alpha = 0.01 $, and the total number of iterations as $ 1000 $. For the problem instance and the parameters we chose, condition~\eqref{eq:condition} is satisfied for $ \varepsilon < 0.447$. Fig.~\ref{fig:variying_epsilon} shows $ \|x - x^{\star}\| $, the distance between our iterate and the optimal point, versus the number of iterations under different choices of $ \varepsilon $. As the plot shows, the algorithm converges to a neighborhood of $x^{\star}$ at a linear rate. Also, for a larger $ \varepsilon $, the steady-state error becomes larger.

For $ \varepsilon = 0.01 $, $ 0.025 $, and $ 0.04 $, the result that the steady-state error increases with $ \varepsilon $ is consistent with the upper bound 
\begin{equation}\label{eq:upper_bound}
  \frac{\left(2 a b \kappa \varepsilon+a^2 \varepsilon^2\right)}{2 \mu_f\left(1-b^2 \kappa^2-2 a b \kappa \varepsilon\right)} 
\end{equation}
given in Theorem~\ref{thm:main}, which is an increasing function of $ \varepsilon $ when condition~\eqref{eq:condition} is satisfied.
However, the numerical experiment suggests that condition~\eqref{eq:condition} given in Theorem~\ref{thm:main} may not be necessary for convergence, as shown in Fig.~\ref{fig:variying_epsilon} when $ \varepsilon = 0.1 $ and $ 0.2 $.


\begin{figure}[thpb]
  \centering
  \includegraphics[width=0.4\textwidth]{./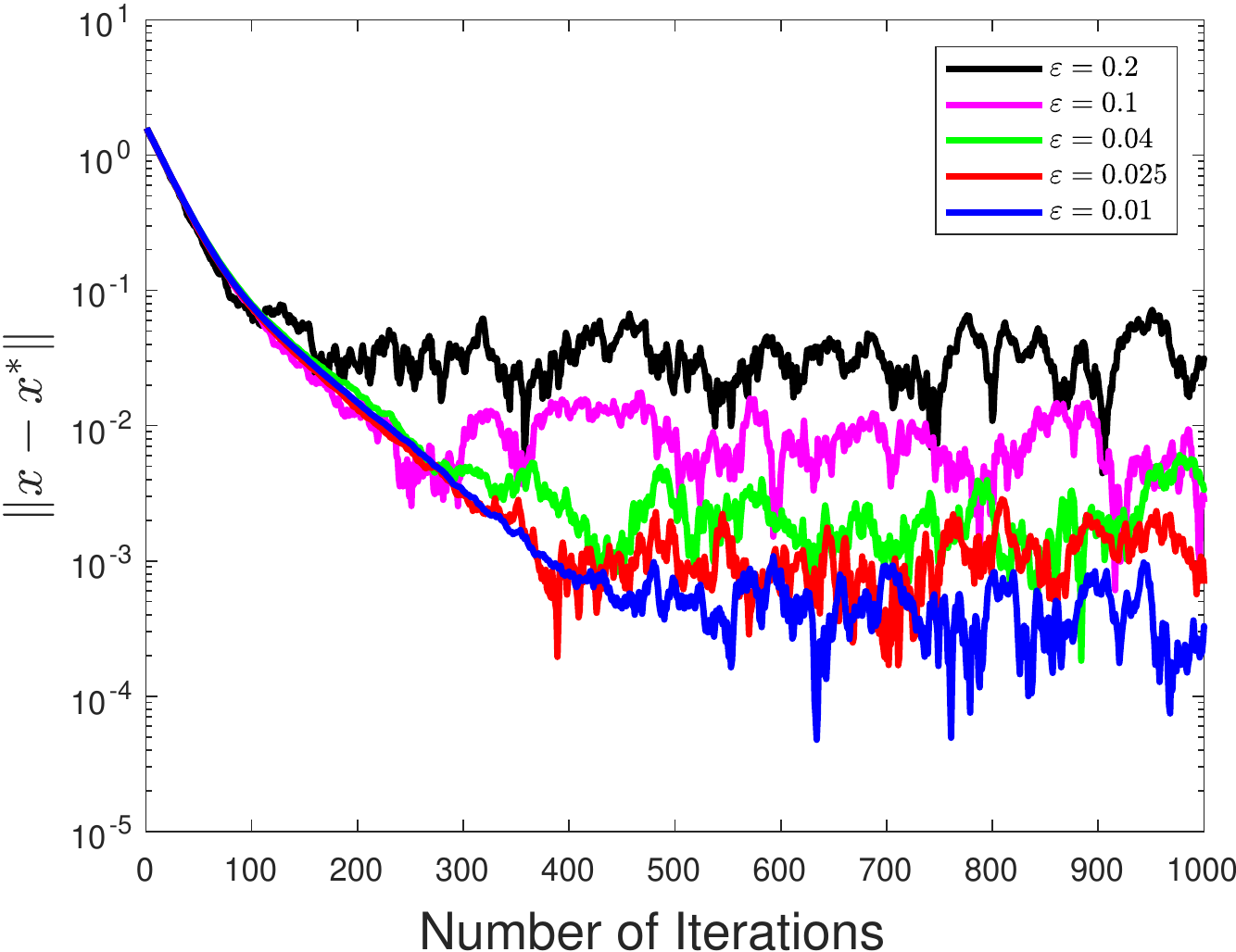}
  \caption{Effect of $ \varepsilon $ on the convergence of~\eqref{eq:learning-dynamics}. The steady-state error increases as $\varepsilon$ increases.}
  \label{fig:variying_epsilon}
\end{figure}

\subsection{Tightness of Error Bound}

In this subsection, we will investigate the tightness of the theoretical error bound~\eqref{eq:upper_bound} given in Theorem~\ref{thm:main} when condition~\eqref{eq:condition} is satisfied. We calculated the gap between the theoretical error bound and the actual error, i.e.,
\begin{equation}\label{eq:tightness}
  \frac{\left(2 a b \kappa \varepsilon+a^2 \varepsilon^2\right)}{2 \mu_f\left(1-b^2 \kappa^2-2 a b \kappa \varepsilon\right)}-(f(x_T) - f^{\star})
\end{equation}
under different choices of $ \varepsilon $, where $ T $ represents the total number of iterations.

It is easy to verify that~\eqref{eq:upper_bound} tends to $ 0 $ as $ \varepsilon $ tends to $ 0 $, which implies that the error bound is tight for $ \varepsilon = 0$, i.e., when the follower gives the exact best response. 
However, our numerical experiment suggests that the theoretical upper bound is loose when $ \varepsilon > 0$, as shown by the relationship between the gap~\eqref{eq:tightness} and $\varepsilon$ given in Fig.~\ref{fig:epsilon}. 
In addition, the gap~\eqref{eq:tightness} increases as $\varepsilon$ increases, which suggests that the theoretical upper bound~\eqref{eq:upper_bound} in Theorem~\ref{thm:main} becomes more conservative as $ \varepsilon $ increases.

\begin{figure}[thpb]
  \centering
  \includegraphics[width=0.4\textwidth]{./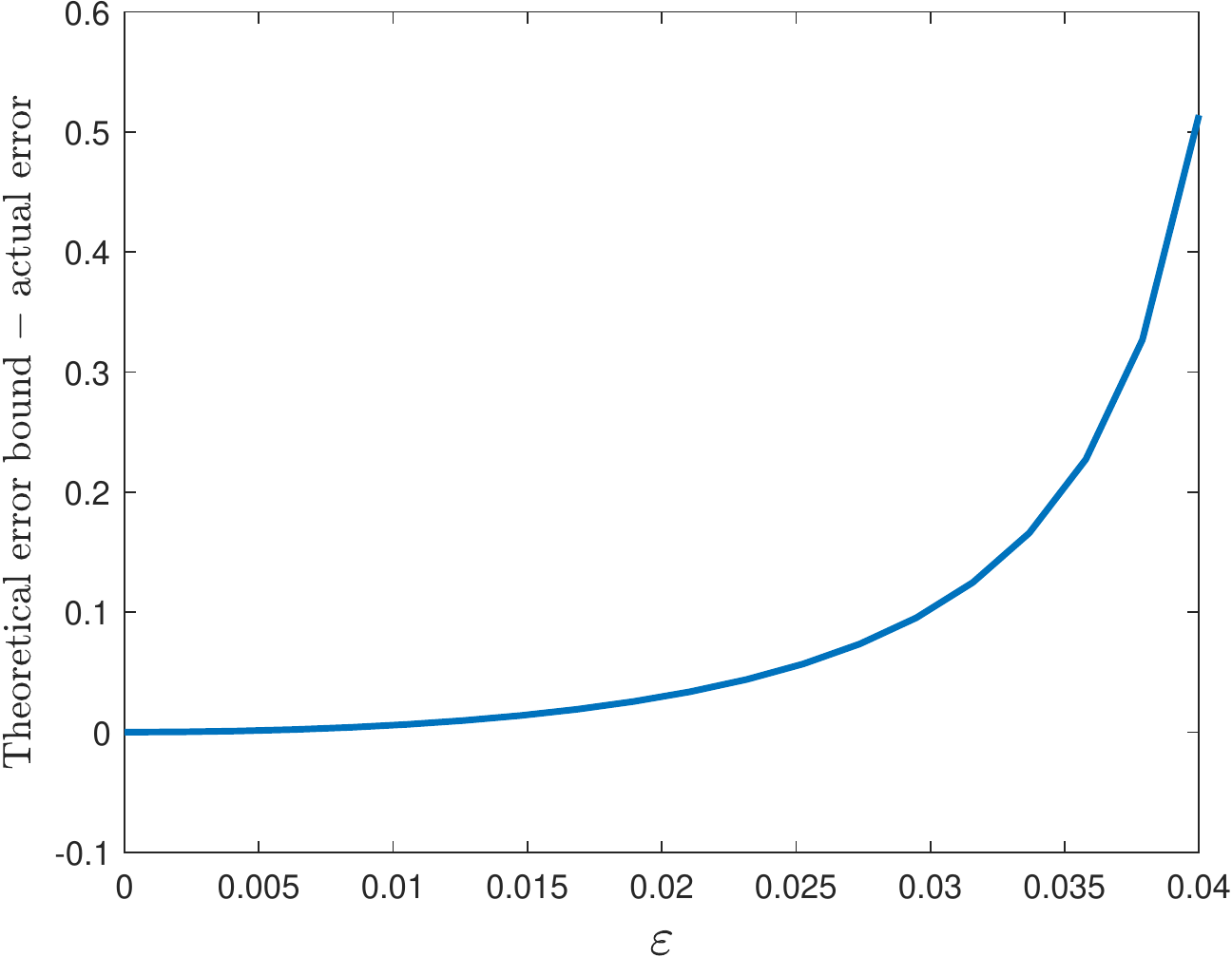}
  \caption{Effect of $ \varepsilon $ on the tightness of the theoretical error bound when $\delta = 0.1$. The theoretical error bound becomes more conservative as $\varepsilon$ increases.}
  \label{fig:epsilon}
\end{figure}

\section{Conclusions}

This paper presents an algorithm for solving Stackelberg games that relaxes several common assumptions in the literature on the follower. Unlike previous work that requires knowledge of the cost function or learning dynamics of the follower, our algorithm only requires the follower to provide an approximate best response under any action played by the leader. This is particularly relevant to interactive decision-making problems in which the follower deviates from rationality and/or has an unknown cost function, for example, when the follower is a human agent. We have shown both theoretically and numerically that our algorithm converges to a neighborhood of the optimal solution at a linear rate.

\bibliographystyle{plain} 
\bibliography{StackelbergGame,acc_2023_added} 

\begin{thebibliography}{10}

\bibitem{borndorfer_stackelberg_2012}
Ralf Bornd{\"o}rfer, Bertrand Omont, Guillaume Sagnol, and Elmar Swarat.
\newblock A {{Stackelberg Game}} to {{Optimize}} the {{Distribution}} of
  {{Controls}} in {{Transportation Networks}}.
\newblock In Vikram Krishnamurthy, Qing Zhao, Minyi Huang, and Yonggang Wen,
  editors, {\em Game {{Theory}} for {{Networks}}}, Lecture {{Notes}} of the
  {{Institute}} for {{Computer Sciences}}, {{Social Informatics}} and
  {{Telecommunications Engineering}}, pages 224--235, {Berlin, Heidelberg},
  2012. {Springer}.

\bibitem{chen_single-timescale_2022-1}
Tianyi Chen, Yuejiao Sun, Quan Xiao, and Wotao Yin.
\newblock A {{Single-Timescale Method}} for {{Stochastic Bilevel
  Optimization}}.
\newblock In {\em Proceedings of {{The}} 25th {{International Conference}} on
  {{Artificial Intelligence}} and {{Statistics}}}, pages 2466--2488. {PMLR},
  May 2022.

\bibitem{colson_overview_2007}
Beno{\^i}t Colson, Patrice Marcotte, and Gilles Savard.
\newblock An overview of bilevel optimization.
\newblock {\em Annals of Operations Research}, 153(1):235--256, September 2007.

\bibitem{conn_bilevel_2012-1}
A.~R. Conn and L.~N. Vicente.
\newblock Bilevel derivative-free optimization and its application to robust
  optimization.
\newblock {\em Optimization Methods and Software}, 27(3):561--577, June 2012.

\bibitem{conn_introduction_2009}
Andrew~R. Conn, Katya Scheinberg, and Luis~N. Vicente.
\newblock {\em Introduction to {{Derivative-Free Optimization}}}.
\newblock {Society for Industrial and Applied Mathematics}, January 2009.

\bibitem{ehrhardt_inexact_2021}
Matthias~J. Ehrhardt and Lindon Roberts.
\newblock Inexact {{Derivative-Free Optimization}} for {{Bilevel Learning}}.
\newblock {\em Journal of Mathematical Imaging and Vision}, 63(5):580--600,
  June 2021.

\bibitem{fiez_implicit_2020}
Tanner Fiez, Benjamin Chasnov, and Lillian Ratliff.
\newblock Implicit {{Learning Dynamics}} in {{Stackelberg Games}}: {{Equilibria
  Characterization}}, {{Convergence Analysis}}, and {{Empirical Study}}.
\newblock In {\em Proceedings of the 37th {{International Conference}} on
  {{Machine Learning}}}, pages 3133--3144. {PMLR}, November 2020.

\bibitem{hadfield-menell_cooperative_2016}
Dylan {Hadfield-Menell}, Stuart~J. Russell, Pieter Abbeel, and Anca Dragan.
\newblock Cooperative inverse reinforcement learning.
\newblock {\em Advances in neural information processing systems}, 29, 2016.

\bibitem{frasconi_linear_2016}
Hamed Karimi, Julie Nutini, and Mark Schmidt.
\newblock Linear {{Convergence}} of {{Gradient}} and {{Proximal-Gradient
  Methods Under}} the {{Polyak-\L ojasiewicz Condition}}.
\newblock In Paolo Frasconi, Niels Landwehr, Giuseppe Manco, and Jilles
  Vreeken, editors, {\em Machine {{Learning}} and {{Knowledge Discovery}} in
  {{Databases}}}, volume 9851, pages 795--811. {Springer International
  Publishing}, {Cham}, 2016.

\bibitem{mersha_direct_2011-1}
Ayalew~Getachew Mersha and Stephan Dempe.
\newblock Direct search algorithm for bilevel programming problems.
\newblock {\em Computational Optimization and Applications}, 49(1):1--15, May
  2011.

\bibitem{nikolaidis_mathematical_2017-2}
Stefanos Nikolaidis, Jodi Forlizzi, David Hsu, Julie Shah, and Siddhartha
  Srinivasa.
\newblock Mathematical {{Models}} of {{Adaptation}} in {{Human-Robot
  Collaboration}}, August 2017.

\bibitem{nikolaidis_human-robot_2017-3}
Stefanos Nikolaidis, David Hsu, and Siddhartha Srinivasa.
\newblock Human-robot mutual adaptation in collaborative tasks: {{Models}} and
  experiments.
\newblock {\em The International Journal of Robotics Research},
  36(5-7):618--634, June 2017.

\bibitem{nikolaidis_game-theoretic_2017}
Stefanos Nikolaidis, Swaprava Nath, Ariel~D. Procaccia, and Siddhartha
  Srinivasa.
\newblock Game-{{Theoretic Modeling}} of {{Human Adaptation}} in {{Human-Robot
  Collaboration}}.
\newblock In {\em Proceedings of the 2017 {{ACM}}/{{IEEE International
  Conference}} on {{Human-Robot Interaction}}}, pages 323--331, March 2017.

\bibitem{pawlick_game-theoretic_2019}
Jeffrey Pawlick, Edward Colbert, and Quanyan Zhu.
\newblock A {{Game-theoretic Taxonomy}} and {{Survey}} of {{Defensive
  Deception}} for {{Cybersecurity}} and {{Privacy}}.
\newblock {\em ACM Comput. Surv.}, 52(4):82:1--82:28, August 2019.

\bibitem{sadigh_planning_2016}
Dorsa Sadigh, Shankar Sastry, Sanjit~A. Seshia, and Anca~D. Dragan.
\newblock Planning for autonomous cars that leverage effects on human actions.
\newblock In {\em Robotics: {{Science}} and Systems}, volume~2, pages 1--9.
  {Ann Arbor, MI, USA}, 2016.

\bibitem{li_hierarchical_2022}
Hichem Sedjelmaci, Makhlouf Hadji, and Nirwan Ansari.
\newblock Cyber {{Security Game}} for {{Intelligent Transportation Systems}}.
\newblock {\em IEEE Network}, 33(4):216--222, July 2019.

\bibitem{tian_safety_2022}
Ran Tian, Liting Sun, Andrea Bajcsy, Masayoshi Tomizuka, and Anca~D. Dragan.
\newblock Safety {{Assurances}} for {{Human-Robot Interaction}} via
  {{Confidence-aware Game-theoretic Human Models}}.
\newblock In {\em 2022 {{International Conference}} on {{Robotics}} and
  {{Automation}} ({{ICRA}})}, pages 11229--11235, May 2022.

\bibitem{wang_jamming_2018}
Kun Wang, Li~Yuan, Toshiaki Miyazaki, Yuanfang Chen, and Yan Zhang.
\newblock Jamming and {{Eavesdropping Defense}} in {{Green
  Cyber}}\textendash{{Physical Transportation Systems Using}} a {{Stackelberg
  Game}}.
\newblock {\em IEEE Transactions on Industrial Informatics}, 14(9):4232--4242,
  September 2018.

\bibitem{wang_game-theoretic_2014}
Yunpeng Wang, Walid Saad, Zhu Han, H.~Vincent Poor, and Tamer Ba{\c s}ar.
\newblock A {{Game-Theoretic Approach}} to {{Energy Trading}} in the {{Smart
  Grid}}.
\newblock {\em IEEE Transactions on Smart Grid}, 5(3):1439--1450, May 2014.

\bibitem{yang_stackelberg_2007}
Hai Yang, Xiaoning Zhang, and Qiang Meng.
\newblock Stackelberg games and multiple equilibrium behaviors on networks.
\newblock {\em Transportation Research Part B: Methodological}, 41(8):841--861,
  October 2007.

\bibitem{zare_bilevel_2020}
M.~Hosein Zare, Oleg~A. Prokopyev, and Denis Saur{\'e}.
\newblock On {{Bilevel Optimization}} with {{Inexact Follower}}.
\newblock {\em Decision Analysis}, 17(1):74--95, March 2020.

\bibitem{zhang_bilevel_2014-2}
Dali Zhang and Gui-Hua Lin.
\newblock Bilevel direct search method for {{Leader}}\textendash{{Follower}}
  problems and application in health insurance.
\newblock {\em Computers \& Operations Research}, 41:359--373, January 2014.

\bibitem{zhang_competitive_2011}
Xiaoning Zhang, H.M. Zhang, Hai-Jun Huang, Lijun Sun, and Tie-Qiao Tang.
\newblock Competitive, cooperative and {{Stackelberg}} congestion pricing for
  multiple regions in transportation networks.
\newblock {\em Transportmetrica}, 7(4):297--320, July 2011.

\end{thebibliography}

\end{document}